\newtheorem{thm}{Theorem}[section]
\newtheorem{prop}[thm]{Proposition}
\theoremstyle{definition}
\newtheorem{defn}[thm]{Definition}
\theoremstyle{remark}
\DeclareMathOperator{\Gr}{Gr_{res}}
\DeclareMathOperator{\Gro}{Gr_{res}^0}
\DeclareMathOperator{\Grp}{Gr_{res}^p}
\newcommand{\U}{U}
\newcommand{\Ur}{U_{\textrm{res}}}
\newcommand{\UU}{\mathfrak U(\H)}
\newcommand{\UUr}{\mathfrak U_{\textrm{res}}}
\newcommand{\uu}{\mathfrak u}
\newcommand{\ur}{\mathfrak u_{\textrm{res}}}
\newcommand{\emphh}[1]{{\bf #1}}
\newcommand{\gr}{\rightrightarrows}
\newcommand{\Li}{L^\infty(\H)}
\renewcommand{\L}{\mathcal{L}(\H)}
\newcommand{\be}{\begin{equation}}
\newcommand{\ee}{\end{equation}}
\newcommand{\ben}{\begin{enumerate}}
\newcommand{\een}{\end{enumerate}}
\newcommand{\norm}[1]{\left\Vert#1\right\Vert}
\newcommand{\prf}[1]{\begin{proof}#1\end{proof}}
\renewcommand{\to}{\rightarrow}
\newcommand{\tto}{\longrightarrow}
\renewcommand{\H}{\mathcal{H}}
\DeclareMathOperator{\id}{id}
\begin{document}
  \title
  {Banach Lie groupoid of partial isometries over restricted Grassmannian}
  \author*[1]{\fnm{Tomasz} \sur{Goliński}}\email{tomaszg@math.uwb.edu.pl}
\author[1]{\fnm{Grzegorz} \sur{Jakimowicz}}\email{g.jakimowicz@uwb.edu.pl}
\author[1]{\fnm{Aneta} \sur{Sliżewska}}\email{anetasl@uwb.edu.pl}

\affil*[1]{\orgdiv{Faculty of Mathematics}, \orgname{University of Bia{\l}ystok}, \orgaddress{\street{Ciołkowskiego 1M}, \city{Białystok}, \postcode{15-245}, \country{Poland}}}

\abstract{The set of partial isometries in a $W^*$-algebra possesses a structure of Banach Lie groupoid. In this paper the differential structure on the set of partial isometries over the restricted Grassmannian is constructed, which makes it into a Banach Lie groupoid.
}

\keywords{partial isometry, restricted Grassmannian, Banach Lie groupoid, Banach manifold, Hilbert--Schmidt operators}

\pacs[MSC Classification]{58B25, 58H05, 22A22, 22E65, 47B10}
\maketitle

\tableofcontents
  
\section{Introduction}

The theory of Lie groupoids and algebroids in infinite-dimensional setting is still very young. To our knowledge, the first results in the Banach context were obtained in \cite{Anastasiei} and \cite{pelletier}. Further results can be found in \cite{BGJP}, where the definitions of the notions of Banach Lie groupoid and algebroid 
have been proposed and many analogues of finite dimensional results were proven. 
There was also some work done in a more general contexts, see e.g. \cite{schmeding15,glockner18,pelletier19,schmeding22}.

An interesting class of examples of Banach Lie groupoids was presented in the paper \cite{OS}. Namely a structure of Banach Lie groupoid was constructed on the set of partial isometries $\mathcal U(\mathfrak M) \gr \mathcal L(\mathfrak M)$ and the set of partially invertible elements $\mathcal G(\mathfrak M) \gr \mathcal L(\mathfrak M)$ of a $W^*$-algebra $\mathfrak M$. The base of these groupoids is the lattice $\mathcal L(\mathfrak M)$ of projectors of a given $W^*$-algebra. In follow-up papers \cite{OJS,OJS-fiber} the Banach Lie algebroid and Poisson side of the picture was investigated. 
One obtains a particular example of this construction by taking as $\mathfrak M$ the $W^*$-algebra of bounded operators acting in a separable complex Hilbert space $L^\infty(\H)$. For simplicity we will denote the groupoid of partial isometries in that case by $\UU \gr \L$. This groupoid turned out to be useful for the study of differential structure of unitary orbits of normal operators, see \cite{beltita23}. Moreover in the papers \cite{GT-momentum,GT-partiso-bial} the hierarchy of differential equations on $\UU$ was discussed.

Generally one can say that there are not many examples of Banach Lie groupoids and algebroids in the literature so far. The purpose of this paper is to investigate a new example: the groupoid of partial isometries $\UUr \gr \Gr$ over the restricted Grassmannian $\Gr$. From an algebraic point of view it is a subgroupoid of $\UU \gr \L$, but the topology and manifold structure need to be different. Another motivation is the hope that this groupoid will have applications in the integrable systems related to the restricted Grassmannian introduced in \cite{GO-grass}.

The restricted Grassmannian $\Gr$ (known also as the Sato Grassmannian) is a strongly symplectic (or even K\"ahler) manifold modelled on a Hilbert space. It first appeared in the study of the KdV and KP equations, see \cite{sato-sato,segal-wilson}. It has however many other applications including quantum field theory \cite{spera-valli,wurzbacher,chiumiento24} or loop groups \cite{segal,sergeev}. It also provides non-trivial examples of Banach Lie--Poisson spaces, which are useful for studying integrable Hamiltonian systems, see \cite{Oext, Ratiu-grass, GO-grass, GO-grass2, GT-momentum}, and it is also connected with Banach Poisson--Lie groups \cite{tumpach-bruhat} and Siegel disc \cite{T-siegel}. As such it is an important object in infinite dimensional differential geometry, see also \cite{tumpach-hyperkahler,andruchow08,zelikin} for a wider context. 

The structure of the paper is as follows. For the sake of self-consistency we start by recalling basic notions and results, which will be needed later on. In Section \ref{sec:BLgr} the definition of Banach Lie groupoid is presented in a particular case of Hausdorff manifolds (see \cite{BGJP} for general case). Next in Section \ref{sec:partiso} the construction of the groupoid of partial isometries in the Hilbert space is described (see \cite{OS}). Section \ref{sec:grres} recalls the definition and basic facts about the restricted Grassmannian $\Gr$. The main results of the paper are contained in Section \ref{sec:res-group}, where the set $\UUr$ of partial isometries over $\Gr$ is introduced and the structure of Banach Lie groupoid is defined on it. Since $\Gr$ is a homogeneous space (unlike $\L$) this construction is carried out using a family of local cross-sections.

\section{Banach Lie groupoids}\label{sec:BLgr}

Before we start let us fix the terminology related to Banach geometry that will be used later on. Note that different conventions are used in literature in general. In our paper a smooth map $f:N\to M$ between two Banach manifolds will be called a \emph{submersion} if for each $x\in N$ the tangent map $Tf: T_xN \to T_{f(x)}M$ is a surjection and $\ker T_xf$ is a split subspace of $T_xN$.

Let us recall the definition of the Banach Lie groupoid from the paper \cite{BGJP}. For simplicity we restrict our attention here to Hausdorff case, which is sufficient in the discussed situation. For a more detailed discussion we refer to aforementioned paper.

\begin{defn}\label{BLG}
A Banach Lie groupoid $\mathcal{G}\gr M$ is a pair $(\mathcal{G},{M})$ of Banach manifolds and the following maps: 
\begin{itemize}
\item surjective submersions
$s: \mathcal{G}\to M$ and $t: \mathcal{G}\to M$ 
called \emph{source} and \emph{target} maps, respectively.

\item smooth map $m:\mathcal{G}^{(2)}\to\mathcal{G}$, where $\mathcal{G}^{(2)}$ is the set of composable elements and is defined as
\[\mathcal{G}^{(2)}:=\{(g,h)\in\mathcal{G}\times\mathcal{G}\;|\; s(g) = t(h)\}\]
and endowed with the induced topology from the product $\mathcal{G}\times \mathcal{G}$. This map is called a \emph{multiplication} and denoted by $m(g,h)=gh$. Associativity condition is required in the sense that the product $(gh)k$ is defined if and only if $ g(hk)$ is defined and in this case they coincide.

\item continuous embedding $\epsilon: M \to \mathcal{G}$ called \emph{identity section} such that
$g\epsilon(x) = g$ for all $g\in s^{-1}(x)$, and $\epsilon(x) g = g$ for all $g\in t^{-1}(x)$.

\item diffeomorphism $\iota: \mathcal{G}\to \mathcal{G}$,  
called \emph{inversion}, which satisfies
$g\iota(g)=\epsilon(t(g))$, $\iota(g)g=\epsilon(s(g))$ for all $g\in\mathcal G$.
\end{itemize}
The manifold $M$ is called the base of the groupoid, and $\mathcal{G}$ is called the total space of the groupoid.
\end{defn}

It was proved in \cite[Proposition 3.1]{BGJP} (see also \cite{glockner15}) that if those conditions hold, the following facts are true:
\ben
\item $\mathcal G^{(2)}$ is a submanifold of $\mathcal G\times \mathcal G$, 
\item the map $\epsilon$ is smooth,
\item $\epsilon(M)$ is a closed Banach submanifold of $\mathcal G$.
\een

Moreover conditions on $t$ can be relaxed as it is automatically a surjective submersion given that $s$ is a surjective submersion and $\iota$ is a diffeomorphism as it uniquely defined by them via $t=s\circ\iota$. 

Standard examples of Banach Lie groupoids include Banach Lie groups, pair groupoids on a Banach manifolds, general linear Banach Lie groupoids of a vector bundle or the the action groupoids of a smooth action of a Banach Lie group on a Banach manifold. As said before, there are Banach Lie groupoids associated to $W^*$-algebras and even unital associative Banach algebras. We will present a particular groupoid from this class in the next section.

For each Banach Lie groupoid there exists the canonically associated Banach Lie algebroid, but we will not present this construction and the necessary definitions here, see \cite{BGJP}.

\section{Banach Lie groupoid of partial isometries}\label{sec:partiso}

In this section we recall some of the results of the paper \cite{OS} adapting them to the particular case of $W^*$-algebra $L^\infty(\H)$ of bounded operators on a Hilbert space $\H$. We focus only on the groupoid of partial isometries $\mathcal U(\Li) = \UU$. The other groupoid constructed in the paper (groupoid of partially invertible elements) will be subject of a separate study.

Let $\L$ denote the lattice of orthogonal projectors in Hilbert space~$\H$
\[\L = \{ p \in \Li \;|\; p^2 = p^* = p\}.\]
For our purposes it will be useful to identify the projector with its image. In this setting $\L$ can be seen as the Grassmannian of all closed subspaces of $\H$. 

The set $\UU$ of partial isometries acting on $\H$ is defined as
\[ u\in\UU \iff u^*u \in \L \iff uu^* \in \L.\]
Alternatively they can be described as unitaries between two closed subspaces of $\H$ (more precisely the orthogonal complement of their kernel and their image) and they fulfill the following equalities
\[u^* u u^* = u^*\qquad uu^*u=u.\]

From the definition it is straightforward that orthogonal projectors are partial isometries $\L\subset \UU$.

In general the product of two partial isometries is not a partial isometry. However introducing appropriate source and target maps one can discover the groupoid structure $\UU\gr \L$. It was introduced in \cite{OS} and it is given by the following natural maps:
\begin{align}
s(u) &= u^*u,\label{u_source}\\
t(u) &= uu^*,\label{u_target}\\
m(u_1,u_2) &= u_1u_2,\label{u_mult}\\
\epsilon(p) &= p,\label{u_id}\\
\iota(u) &= u^*\label{u_inv},
\end{align}
for $u, u_1, u_2 \in \UU$ such that $u_2u_2^* = u_1^*u_1$ and $p\in \L$.
In this case the condition $u_2u_2^* = u_1^*u_1$ implies that the product $u_1u_2$ is again a partial isometry.


In order to obtain Banach Lie groupoid structure on $\UU$ one needs 
first to describe a differential structure on the Grassmannian $\L$. In the paper \cite{OS} it is constructed in terms of $W^*$-algebras. For the sake of this paper a more direct approach is advised, namely a version of the construction for the case of split Grassmannian of a Banach space from \cite[Section 3.1.8.G]{ratiu-mta}. We will present more straightforward and readable formulas that can be obtained in this particular case.

The construction of this differential structure goes through a family of charts $\phi_W: U_W \to L^\infty(W,W^\perp)$ indexed by $W\in\L$ defined by
\be \label{chart_grass}\phi_W(V) = P_{W^\perp} (P_W|_{V})^{-1},\ee
where
\be \label{chart_dom_grass} U_W = \{ V \in \L \;|\; V\oplus_B W^\perp = \H \},\ee
$P_W$ is orthogonal projection on $W$ and $\oplus_B$ denotes a direct sum in the sense of Banach spaces (i.e. not necessarily orthogonal). Note that the condition for $V$ to belong to the chart domain $U_W$ is equivalent to invertibility of the projection $P_W|_{V}$.

The inverse map  $\phi_W^{-1}: L^\infty(W,W^\perp) \to \L$ to a chart assigns to a bounded operator its graph in $W\oplus W^\perp = \H$ i.e.
\be \phi_W^{-1}(A) = \{ (w, Aw) \in W\times W^\perp \;|\; w\in W\}\ee
for $A \in L^\infty(W,W^\perp)$.
The transition functions can be computed explicitly, see e.g. \cite{ratiu-mta,segal,OS} and are homographies.

One can also consider $\L$ as a level set inside $\Li$ and use this point of view to define a differential structure, see e.g. \cite{porta1987,andruchow2005}. It is straightforward to check that it would produce the same differential structure.

The differential structure on $\UU$ compatible with the groupoid structure was described in details in \cite{OS}. We will not present the explicit formulas here. Note that one can adapt the expressions from the Section \ref{sec:res-group}, one only needs to be careful as $\UU$ does not act transitively on $\L$ and in consequence one needs to consider each orbit enumerated by dimension and codimension of the subspace separately.

\section{Restricted Grassmannian as a Hilbert manifold}\label{sec:grres}

In this section we will recall briefly the necessary information about the restricted Grassmannian. More detailed exposition can be found e.g. in \cite{segal,wurzbacher}.

We fix an orthogonal decomposition (called polarization) of the Hilbert space $\H$ 
\be\label{polarization}\H=\H_+\oplus \H_-\ee
onto infinite dimensional closed subspaces $\H_\pm$. We will denote by $P_+$ and $P_-$ the orthogonal projectors onto $\H_+$ and $\H_-$ respectively. 

$L^p(\H)$ will denote the Schatten class of operators acting in $\H$ equipped with the norm $\norm{\,\cdot\,}_p$. The $L^p(\H)$ spaces are ideals in algebra $\Li$. In particular $L^1(\H)$ denotes the ideal of trace-class operators and $L^2(\H)$ is the ideal of Hilbert--Schmidt operators, which possesses additionally the structure of Hilbert space. By $L^0(\H)\subset \Li$ one denotes the ideal of compact operators, which is operator norm $\norm\cdot_\infty$ closure of any Schatten ideal and the only closed non-trivial two-sided $*$-ideal in $\Li$.

In order to simplify the notation throughout the paper we will use the indices $\pm$ or $\pm\pm$ to denote sets of operators acting in $\H_\pm$. For example $L^1_+$ would stand for $L^1(\H_+)$ and $L^2_{+-}$ would stand for $L^2(\H_-,\H_+)$ (note the order of signs in the last symbol). One can also introduce a block decomposition with respect of the polarization of an operator $A$ acting on $\H$:
\be \label{blocks}
A=\left(
\begin{array}{cc}
  A_{++} & A_{+-}\\
  A_{-+} & A_{--}\\
\end{array}
\right).\ee
We will also identify the operators $A_{\pm\pm}:\H_\pm\to\H_\pm$ with $P_\pm A P_\pm$ when it won't lead to any confusion.

\begin{defn}\label{def:gr}
The \emphh{restricted Grassmannian} $\Gr$ is defined as a set of closed subspaces $W\subset\H$ such that:
\ben[i)]
\item the orthogonal projection $p_+:W\to \H_+$ is a Fredholm operator;
\item the orthogonal projection $p_-:W\to \H_-$ is a Hilbert--Schmidt operator.
\een
\end{defn}

As said before, we identify a closed subspace $W$ with an orthogonal projector onto this subspace, which we denote $P_W$. 
Thus we identify $\Gr$ with the set of projectors $\{ P_W \;|\; W\in\Gr\}\subset \L$.

From \cite{spera-valli} one has the following equivalent description of $\Gr$:
\begin{prop}\label{prop:gr-proj}
\[W\in\Gr \Longleftrightarrow P_W-P_+ \in L^2\]
\end{prop}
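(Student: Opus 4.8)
The plan is to pass to the block decomposition of $P_W-P_+$ with respect to the polarization \eqref{polarization} and to show that membership of this operator in $L^2(\H)$ is equivalent, read block by block, to the conjunction of conditions i) and ii) of Definition \ref{def:gr}. Using \eqref{blocks} I would first write
\[ P_W-P_+ = \begin{pmatrix} P_{++}-I & P_{+-} \\ P_{-+} & P_{--}\end{pmatrix}, \]
so that $P_W-P_+\in L^2(\H)$ precisely when all four blocks are Hilbert--Schmidt. Three of them are easy: since $p_-=P_-|_W$ and $P_W$ is an orthogonal projector, computing the Hilbert--Schmidt norm in an orthonormal basis adapted to $\H=W\oplus W^\perp$ gives $\norm{p_-}_2=\norm{P_-P_W}_2$, and $P_-P_W$ has exactly $P_{-+}$ and $P_{--}$ as its nonzero blocks. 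Together with $P_{+-}=P_{-+}^*$ this shows that condition ii) is equivalent to $P_{+-},P_{-+},P_{--}\in L^2(\H)$.

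It remains to control the corner $P_{++}-I$, and this is where condition i) enters; I expect it to be the main obstacle. From $P_W^2=P_W=P_W^*$ one reads off the block identity $P_{++}(I-P_{++})=P_{+-}P_{-+}=P_{-+}^*P_{-+}$, so once ii) holds the right-hand side is a product of two Hilbert--Schmidt operators and hence lies in $L^1(\H)$. This alone is not enough: it forces $I-P_{++}$ to be small only on the part of the spectrum of $P_{++}$ bounded away from $0$, while the spectral subspace near $0$ — which is exactly $\ker P_{++}=\H_+\cap W^\perp=\ker p_+^*$, the cokernel of $p_+$ — is left uncontrolled. The Fredholm hypothesis i) is precisely the assertion that this cokernel is finite-dimensional.

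Combining the two facts, I would split $I-P_{++}=(I-P_{++})Q+(I-P_{++})(I-Q)$ using the spectral projector $Q$ of $P_{++}$ for $[0,\tfrac12]$. On the range of $I-Q$ the elementary inequality $1-\lambda\le 2\lambda(1-\lambda)$ for $\lambda\in(\tfrac12,1]$ yields $(I-P_{++})(I-Q)\le 2P_{++}(I-P_{++})$, a trace-class and hence Hilbert--Schmidt contribution. On the other hand $\operatorname{range}Q$ is finite-dimensional: its part in $(0,\tfrac12]$ is finite because $p_+^*p_+=\mathrm{id}_W-p_-^*p_-$ is identity-minus-compact (so its spectrum, which agrees with that of $p_+p_+^*=P_{++}$ off $0$, accumulates only at $1$), and its part at $0$ is finite by the Fredholm condition. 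Hence $I-P_{++}\in L^2(\H)$, completing the forward implication.

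For the converse I would run these identifications backwards. If $P_W-P_+\in L^2(\H)$ then all four blocks are Hilbert--Schmidt; the lower blocks give $P_-P_W\in L^2(\H)$ and thus ii), while $P_{++}-I\in L^2(\H)\subset L^0(\H)$ shows that both $p_+p_+^*=P_{++}$ and $p_+^*p_+=\mathrm{id}_W-p_-^*p_-$ are of the form identity-minus-compact, whence $p_+$ is Fredholm, giving i). The only genuinely delicate point throughout is the corner estimate above; everything else is bookkeeping with the projection relations and the standard $L^p$ ideal properties.
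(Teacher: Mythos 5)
Your proof is correct, but note that the paper contains no proof of this proposition to compare against: it is quoted as known, with a citation to Spera--Valli \cite{spera-valli}. So your argument is, in effect, a self-contained replacement for that reference, and it checks out. The block-by-block reduction is the natural route: the identity $\norm{p_-}_2=\norm{P_-P_W}_2$ together with $P_{+-}=P_{-+}^*$ correctly converts condition ii) into Hilbert--Schmidt membership of the three non-corner blocks; the projector identity $P_{++}(I-P_{++})=P_{-+}^*P_{-+}$, the spectral splitting of $P_{++}$ at $\tfrac12$, and the inequality $1-\lambda\le 2\lambda(1-\lambda)$ on $(\tfrac12,1]$ handle the delicate corner, with the Fredholm hypothesis entering exactly where it must, namely to bound $\dim(\H_+\cap W^\perp)=\dim\ker P_{++}$; and invertibility of $p_+$ modulo compacts with parametrix $p_+^*$ (Atkinson) cleanly yields the converse. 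Two steps you state in compressed form deserve a line each in a written-up version: first, that the spectral subspace of $P_{++}=p_+p_+^*$ over $(0,\tfrac12]$ has the same finite dimension as that of $p_+^*p_+$ over the same set, which follows because the partial isometry in the polar decomposition of $p_+$ intertwines the two restrictions away from the kernels; second, that $0\le A\le B$ with $B\in L^1(\H)$ forces $A\in L^1(\H)$, which is what licenses the trace-class conclusion for $(I-P_{++})(I-Q)$. As a side benefit your argument makes explicit a fact the statement hides: once ii) holds, $p_+^*p_+=\mathrm{id}_W-p_-^*p_-$ is identity-minus-compact, so $p_+$ automatically has finite-dimensional kernel and closed range, and condition i) reduces precisely to finiteness of the cokernel $\H_+\cap W^\perp$.
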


While studying $\Gr$ certain Banach Lie groups and Banach Lie algebras appear. Their elements satisfy a condition $[X, P_+]\in L^2(\H)$. It means that off-diagonal blocks $X_{-+}$, $X_{+-}$ of block decomposition \eqref{blocks} are $L^2$ class. Note that due to the fact that $L^2(\H)$ is an ideal, composition of operators satisfying this condition also satisfies this condition.

One introduces the unitary restricted group:
\be\Ur:=\{ u\in \U \;|\; [u,P_+]\in L^2\},\ee
where $\U$ is a unitary group of operators acting in $\H$. It possesses the structure of Banach Lie group with Banach Lie algebra
\be\ur:=\{ x\in \uu \;|\; [x,P_+]\in L^2\},\ee
where $\uu$ is a Banach Lie algebra of skew symmetric operators on $\H$. $\Ur$  possesses also the structure of Banach Poisson--Lie group, see \cite{tumpach-bruhat}.

The Banach Lie group $\Ur$ acts transitively on $\Gr$ and the stabilizer of $\H_+$ for this action is $U_+\times U_-$, which is a Banach Lie subgroup of $\Ur$, see \cite[Definition 4.1]{beltita2006}. In this way, $\Gr$ can be seen as a smooth homogeneous space $\Ur/(U_+\times U_-)$.

The differential structure on $\Gr$ is obtained using the charts given formally by the same formulas as in the case of $\L$. One notes though that in case of $\Gr$ these charts take value in the Hilbert spaces $L^2(W,W^\perp)$ and transition functions are smooth with respect to the $L^2$ topology, see also \cite{segal,wurzbacher}. For clarity we will now write down some of the formulas in more details.

Analogously to the previous situation, the inverse of the chart $\phi_{W}^{-1}:L^2(W,W^\perp)\to \Gr$ assigns to a Hilbert--Schmidt operator the projector onto its graph in $W\oplus W^\perp = \H$. For the particular case of $W=\H_+$ this map assumes the form
\be \label{coord}
\phi_{\H_+}^{-1}(A) = \left(
\begin{array}{cc}
  (1+A^*A)^{-1} & (1+A^*A)^{-1}A^*\\
  A(1+A^*A)^{-1} & A(1+A^*A)^{-1}A^*\\
\end{array}
\right)
\ee
for $A\in L^2_{+-}$. This formula follows by expanding the projector $\phi_{\H_+}^{-1}(A)$ into block form and solving the following equations
\[ \phi_{\H_+}^{-1}(A) 
\left(\begin{array}{c} x\\ Ax\end{array}\right)
=
\left(\begin{array}{c} x\\ Ax\end{array}\right),
\]
\[ \phi_{\H_+}^{-1}(A) 
\left(\begin{array}{c} -A^*x\\ x\end{array}\right)
= 0
\]
for any $x\in \H_+$.
One easily sees using the Proposition \ref{prop:gr-proj} that indeed $\phi_{H_+}^{-1}$ takes values in $\Gr$. Namely, upper right corner of the block decomposition of 
$\phi_{\H_+}^{-1}(A)-P_+$ reads $(1+A^*A)^{-1}-1=-(1+A^*A)^{-1}A^*A$. It is easily seen that this and all other terms belong to $L^2$ class.

One can rewrite the formula \eqref{coord} as:
\be \phi_{\H_+}^{-1}(A) = (1_{\H_+} + A)(1_{\H_+}+A^*A)^{-1}(P_+ + A^* P_-).\ee
As a consequence from homogeneity we get that for an arbitrary $W\in\Gr$ the inverse of the chart can be written as
\be \phi_W^{-1}(A) = (1_W + A)(1_W+A^*A)^{-1}(P_W + A^* P_{W^\perp})\ee
for $A\in L^2(W,W^\perp)$.

Now, the formula \eqref{chart_grass} adapted to the case of the chart $\phi_{\H_+}:\Omega_{\H_+}\to L^2_{+-}$ can be expressed as
\be \phi_{\H_+}(p) = P_- p P_+ (P_+ p P_+)^{-1} = p_{-+}(p_{++})^{-1} = p(p_{++})^{-1}-P_+,\ee
where
\be \Omega_{\H_+} = \{ p\in \Gr \;|\; p_{++} \textrm{ is invertible in }\H_+\}.\ee
This map indeed takes values in $L^2_{+-}$ since $P_-p$ is Hilbert--Schmidt operator by the definition of the restricted Grassmannian.

Thus by direct computation we get that the transition maps for the atlas $\{\phi_{W},\Omega_W\}_{W\in \Gr}$ are equal
\be \psi_{\H_+,W}(A) = \phi_{\H_+}\circ\phi_W^{-1}(A) = \ee
\[P_-(1_W + A)(1_W+A^*A)^{-1}(P_W + A^* P_{W^\perp})P_+\big(P_+(1_W + A)(1_W+A^*A)^{-1}(P_W + A^* P_{W^\perp})P_+\big)^{-1}\]
for $A\in L^2(W,W^\perp)$ such that $\phi_W^{-1}(A)\in\Omega_{\H_+}$. This condition on the operator $A$ means that both the projectors $P_W$ and $P_+$ restricted to the graph of $A$ are invertible. Explicitly it is equivalent to the fact that the operator $P_+(1_W + A)(1_W+A^*A)^{-1}(P_W + A^* P_{W^\perp})_{\H_+}$ is invertible in $\H_+$. Observe that this operator is a product of three terms and the middle one $(1_W+A^*A)^{-1}$ is always invertible in $W$. Moreover operator $(1_W + A)$ is an isomorphism from $W$ to the graph of $A$. In consequence the operator $P_+(1_W + A):W\to P_+$, under the stated assumptions on $A$, is also invertible. It follows now that third term is also invertible and inverting the expression term-by-term we obtain
\be \label{trans} \psi_{\H_+,W}(A) = P_-(1_W + A)\big(P_+(P_W + A)\big)^{-1}.\ee
As expected, it is a fractional map and in consequence it is smooth as a composition of the inverse map in the space $L^\infty(W,\H_+)$, which is smooth, and multiplication $L^2(W,\H_-) \times L^\infty(\H_+,W)\to L^2(\H_+,\H_-)$, which is linear and bounded.

Note that  partial inverse (or Moore--Penrose inverse) 
is in general not smooth and even not continuous, see the discussion in \cite{BGJP} and references therein.

\section{Banach Lie groupoid of partial isometries over restricted Grassmannian}\label{sec:res-group}

Since $\Gr$ is a subset of $\L$, one can define a groupoid 
$\UUr \gr \Gr$ as a subgroupoid of $\UU$ by
\be \label{gr-iso}\UUr = s^{-1}(\Gr) \cap t^{-1}(\Gr)\ee
or equivalently
\be \UUr = \{ u \in \UU \;|\; u^*u, uu^* \in \Gr\}. \ee
Directly from this definition it follows that this set is a subgroupoid of $\UU$ in the algebraic sense. However one still needs to introduce a differential structure on it to make it into a Banach Lie groupoid in the sense of Definition \ref{BLG}.

Before that, we note that the following proposition is a consequence of definition of $\UUr$:
\begin{prop}\label{prop:groupoid-L2}
For $u\in\UUr$ we have $[u,P_+]\in L^2$.
\end{prop}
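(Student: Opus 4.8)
The plan is to rewrite the commutator $[u,P_+]=uP_+-P_+u$ as a difference of two operators that are each visibly Hilbert--Schmidt, exploiting two ingredients: the characterization of membership in $\Gr$ from Proposition \ref{prop:gr-proj}, and the defining algebraic identities of a partial isometry. Since $u\in\UUr$ means precisely that the source projector $u^*u$ and the target projector $uu^*$ both lie in $\Gr$, Proposition \ref{prop:gr-proj} immediately yields
\[
u^*u-P_+\in L^2,\qquad uu^*-P_+\in L^2,
\]
so that, modulo $L^2$, each of $u^*u$ and $uu^*$ may be replaced by $P_+$.

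Next I would substitute these relations into the commutator. Writing $P_+=(P_+-u^*u)+u^*u$ in the factor $uP_+$ and $P_+=(P_+-uu^*)+uu^*$ in the factor $P_+u$, and then using the partial isometry identities $u(u^*u)=u$ and $(uu^*)u=u$ to cancel the resulting diagonal terms, one obtains
\[
uP_+-P_+u=u(P_+-u^*u)-(P_+-uu^*)u.
\]
The only subtlety is to replace the right-hand $P_+$ in $uP_+$ by the source projector $u^*u$ and the left-hand $P_+$ in $P_+u$ by the target projector $uu^*$, since it is exactly these pairings that are annihilated by the identities $u(u^*u)=u$ and $(uu^*)u=u$.

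Finally I would invoke the ideal property. Both $P_+-u^*u$ and $P_+-uu^*$ belong to $L^2$ by the first step, $u$ is bounded, and $L^2$ is a two-sided ideal in $\Li$; hence $u(P_+-u^*u)$ and $(P_+-uu^*)u$ both lie in $L^2$, and so does their difference $[u,P_+]$. I do not expect any genuine analytic obstacle here: once the commutator has been reorganized via the partial isometry identities, the conclusion is purely formal and rests entirely on $L^2$ being an ideal. The one point requiring care is the bookkeeping of matching each one-sided projector with the correct side of the multiplication.
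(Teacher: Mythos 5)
Your proof is correct, and it takes a genuinely different route from the paper's. You settle the claim purely algebraically via the identity
\[
[u,P_+] \;=\; u\,(P_+-u^*u)\;-\;(P_+-uu^*)\,u,
\]
which follows from $u(u^*u)=u$ and $(uu^*)u=u$ exactly as you describe (each term's bookkeeping checks out), combined with Proposition \ref{prop:gr-proj} giving $u^*u-P_+,\,uu^*-P_+\in L^2$ and the two-sided ideal property of $L^2$ in $\Li$. The paper argues instead by homogeneity: since $\Ur$ acts transitively on $\Gr$, any $u\in\UUr$ factors as $u=g_1\tilde u g_2$ with $\tilde u\in U_+$ and $g_1,g_2\in\Ur$; each factor has off-diagonal blocks of Hilbert--Schmidt class, and that condition is stable under products because $L^2$ is an ideal. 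Your computation is more elementary and self-contained --- it needs neither the transitivity of the action nor the separate trivial case $u\in U_+$, and as a bonus it yields the explicit estimate $\norm{[u,P_+]}_2 \le \norm{u^*u-P_+}_2 + \norm{uu^*-P_+}_2$, since $\norm{u}_\infty\le 1$. The paper's factorization, in turn, is in the spirit of the cross-section machinery that drives the rest of Section \ref{sec:res-group} and effectively previews the chart construction \eqref{urr-map}. Both arguments transfer verbatim to the $p$-restricted groupoids $\UUr^p$, as $L^p$ is likewise a two-sided ideal.
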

\prf{
First of all the conclusion holds trivially for the case $u^*u=uu^*=P_+$, i.e. $u\in U_+$. Now since $\Ur$ acts transitively on $\Gr$, any $u\in\UUr$ can be written as $u=g_1 \tilde u g_2$ for $\tilde u\in U_+$ and $g_1, g_2\in \Ur$. 
The proposition follows since all operators on the right hands side of this expression have off-diagonal blocks of Hilbert--Schmidt class.
}

Let us note that unitary restricted group $\Ur$ is not contained in $\UUr$ (for example identity operator belongs to $\Ur$, but not to $\UUr$). That also shows that the conditions from the Proposition \ref{prop:groupoid-L2} above are not sufficient for $u$ to belong to $\UUr$.

In order to define a differential structure on $\UUr$ we need first the following fact:
\begin{prop}
For every point $W\in \Gr$ there exists a neighbourhood $\Omega_W\subset \Gr$ and a smooth map $\sigma_W: \Omega_W\to \Ur$ such that
\be \label{cross}\forall\; W'\in \Omega_W \;\;\; W' = \sigma_W(W') \H_+.\ee
\end{prop}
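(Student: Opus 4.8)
The plan is to write down the cross-section by an explicit ``polar'' formula in the chart $\phi_W$ and to reduce its membership in $\Ur$ to the homogeneity of $\Gr$. I fix $W\in\Gr$, take $\Omega_W$ together with $\phi_W:\Omega_W\to L^2(W,W^\perp)$ to be the chart recalled in Section \ref{sec:grres}, and for $W'\in\Omega_W$ I set $A=\phi_W(W')$, so that $W'$ is the graph of $A$ over $W$. In the decomposition $\H=W\oplus W^\perp$ I then define
\be
v(A) = \left(\begin{array}{cc} (1+A^*A)^{-1/2} & -A^*(1+AA^*)^{-1/2} \\ A(1+A^*A)^{-1/2} & (1+AA^*)^{-1/2} \end{array}\right),
\ee
which is the unitary factor in the polar decomposition of $\left(\begin{array}{cc} 1 & -A^* \\ A & 1 \end{array}\right)$. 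Using transitivity of the $\Ur$-action I choose, once and for all, an element $w_0\in\Ur$ with $w_0\H_+=W$, and put $\sigma_W:=v(\phi_W(\cdot))\,w_0$.

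A direct block computation will give $v(A)^*v(A)=v(A)v(A)^*=1$ and will show that the first block column of $v(A)$ maps $W$ onto the graph of $A$; hence $v(A)$ is unitary with $v(A)W=W'$, and therefore $\sigma_W(W')\H_+=v(A)(w_0\H_+)=v(A)W=W'$, which is exactly \eqref{cross}. To see that $v(A)\in\Ur$ I would split $v(A)=D+N$ into its block-diagonal and block-off-diagonal parts relative to $W\oplus W^\perp$. The blocks of $N$ are $A$ composed with a bounded operator, so $N\in L^2(\H)$ and $[N,P_+]\in L^2$; since $D$ commutes with $P_W$ and $P_W-P_+\in L^2$ by Proposition \ref{prop:gr-proj}, one has $[D,P_+]=[D,P_+-P_W]\in L^2$. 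Thus $[v(A),P_+]\in L^2$, and because $\Ur$ is a group and $w_0$ is fixed, $\sigma_W$ indeed takes values in $\Ur$.

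The genuinely delicate point, and the step I expect to be the main obstacle, is smoothness of $A\mapsto v(A)$ for the Banach--Lie-group topology of $\Ur$, in which the diagonal blocks are controlled in the operator norm while the off-diagonal blocks are controlled in the Hilbert--Schmidt norm. Since $\Ur$ is the unitary group of the involutive Banach algebra $\{X\in\Li \;|\; [X,P_+]\in L^2\}$ equipped with the norm $\norm{X}_\infty+\norm{[X,P_+]}_2$, its manifold structure is induced from that algebra, and it is enough to prove that $A\mapsto v(A)$ is smooth as a map into it, i.e.\ smooth into $\Li$ in the operator norm and with $[v(A),P_+]$ smooth into $L^2(\H)$. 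For the operator-norm part I would use that $A\mapsto A^*A$ is a bounded quadratic map with values in $L^1(\H)\hookrightarrow\Li$ and that $T\mapsto(1+T)^{-1/2}$ is smooth in the operator norm on positive operators, so each block of $v(A)$ depends smoothly on $A$; the off-diagonal blocks are moreover smooth as $L^2$-valued maps, being products of $A$, which is Hilbert--Schmidt and enters linearly, with an operator-norm-smooth bounded factor, using boundedness of the multiplication $L^2(\H)\times\Li\to L^2(\H)$. The commutator $[v(A),P_+]$ is then smooth into $L^2(\H)$ via the splitting $v=D+N$ above, together with boundedness of $B\mapsto[B,P_+-P_W]$ from $\Li$ to $L^2(\H)$ and of $[\,\cdot\,,P_+]$ on $L^2(\H)$. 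Finally, right translation by the fixed $w_0$ is smooth in $\Ur$, so $\sigma_W$ is smooth, completing the construction.
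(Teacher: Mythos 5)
Your proof is correct, and it takes a genuinely different route from the paper's. The paper disposes of the proposition in two lines: it regards $\Gr$ as the homogeneous space $\Ur/(U_+\times U_-)$, observes that $\sigma_W$ is a local section of the quotient map $\pi:\Ur\to\Gr$, and invokes Theorem 4.19 of \cite{beltita2006}, which produces smooth local sections whenever the isotropy group is a Banach Lie subgroup. You instead construct the section explicitly: in the chart at $W$ you take the unitary polar factor $v(A)$ of $\left(\begin{smallmatrix}1&-A^*\\ A&1\end{smallmatrix}\right)$, verify by hand that $v(A)W$ is the graph of $A$, that $[v(A),P_+]\in L^2$ (your splitting $v=D+N$ with $[D,P_+]=[D,P_+-P_W]$, reducing the diagonal part to Proposition \ref{prop:gr-proj} and the off-diagonal part to the ideal property of $L^2$, is exactly right), and that $A\mapsto v(A)$ is smooth for the norm $\norm{X}_\infty+\norm{[X,P_+]}_2$, which you correctly identify as the delicate point; the holomorphic functional calculus for $T\mapsto(1+T)^{-1/2}$ together with the bounded bilinearity of $L^2(\H)\times\Li\to L^2(\H)$ does handle it. What the paper's argument buys is brevity and portability: it applies verbatim to the $p$-restricted Grassmannians discussed at the end of the paper, with no new estimates. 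What yours buys is explicitness: it shows $\Omega_W$ can be taken to be the entire chart domain, and the closed-form $\sigma_W$ would make the charts \eqref{urr-chart} on $\UUr$ genuinely computable, which the abstract section does not. The one step you assert rather than prove — that smoothness into the Banach algebra $\{X\in\Li\;|\;[X,P_+]\in L^2\}$ implies smoothness into $\Ur$ — deserves a line or a citation (e.g.\ \cite{beltita2006}): $\Ur$ is a split real Banach Lie subgroup of the group of invertibles of that algebra, for instance as the fixed-point set of the involution $g\mapsto (g^*)^{-1}$; this is standard, so it is a presentational omission, not a gap.
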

\prf{
We treat $\Gr$ as a homogeneous space $\Ur/(U_+\times U_-)$. If by $\pi:\Ur\to\Gr$ we denote the projection onto the orbits, then $\sigma_W$ is a local cross section of $\pi$, i.e. $\pi\circ\sigma = \id$.

Since $U_+\times U_-$ is a Banach Lie subgroup of $\Ur$, Theorem 4.19 from \cite{beltita2006} implies the result.}

Using a family of local cross sections we can transport every partial isometry to a unitary operator $\sigma_W(uu^*)^{-1} \;u\; \sigma_{W'}(u^*u)_{|\H_+}$ acting in the Hilbert space $\H_+$. In this way we 
construct a family of injective maps:
\be \UUr \supset s^{-1}(\Omega_{W'})\cap t^{-1}(\Omega_W) \tto \Gr \times \Gr \times U_+,\ee
for $W,W'\in\Gr$, given by
\be \label{urr-map} u \mapsto  (uu^*, u^*u, \sigma_W(uu^*)^{-1} \;u\; \sigma_{W'}(u^*u)_{|\H_+}),\ee
which assign to a partial isometry its final space, initial space and a unitary operator in $\H_+$. Naturally, knowing value of this map it is possible to reconstruct the partial isometry.

Now we couple these maps with charts on the respective manifolds. Namely let $(V_\alpha, \psi_\alpha)$ denote the atlas on $U_+$, where $V_\alpha\subset U_+$ are open sets and  $\psi_\alpha : V_\alpha \to \uu_+$. By $(\Omega_\beta, \sigma_\beta)$ denote a family of cross sections \eqref{cross} covering $\Gr$. Assume additionally that sets $\Omega_\beta$ are  chosen open and small enough that they are contained in chart domains of $\Gr$, i.e. there exists a map $\tilde\psi_\beta: \Omega_\beta \to L^2(W_\beta,W_\beta^\perp)$ for appropriately chosen $W_\beta\in\Omega_\beta\subset  \Gr$.

Denote by $\Omega_{\alpha\beta\gamma}$ the set
\be \Omega_{\alpha\beta\gamma} = \{ u \in \UUr \;|\; u^*u\in\Omega_\beta, uu^*\in\Omega_\gamma, (\sigma_\gamma(uu^*)^{-1} u \sigma_\beta(u^*u))_{|\H_+} \in V_\alpha\}\ee
and by $\Phi_{\alpha\beta\gamma}:\Omega_{\alpha\beta\gamma}\to L^2(W_\gamma,W_\gamma^\perp) \times L^2(W_\beta,W_\beta^\perp) \times \uu_+$ the map
\be \label{urr-chart}
\Phi_{\alpha\beta\gamma}(u) =  (\tilde\psi_\gamma(uu^*), \tilde\psi_\beta(u^*u), \psi_\alpha(\sigma_\gamma(uu^*)^{-1} u \sigma_\beta(u^*u))_{|\H_+}),\ee
which is composition of $(\tilde\psi_\gamma, \tilde\psi_\beta, \psi_\alpha)$ with the map \eqref{urr-map}.
\begin{thm}\label{thm:manifold}
The family $(\Omega_{\alpha\beta\gamma},\Phi_{\alpha\beta\gamma})$ constitutes a smooth atlas on the set of partial isometries $\UUr$ making it a smooth Banach manifold modelled on the Banach spaces $L^2(W_\gamma,W_\gamma^\perp) \times L^2(W_\beta,W_\beta^\perp) \times \uu_+$.
\end{thm}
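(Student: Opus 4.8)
To show $(\Omega_{\alpha\beta\gamma},\Phi_{\alpha\beta\gamma})$ is a smooth atlas, I need to verify three things: that the $\Omega_{\alpha\beta\gamma}$ cover $\UUr$; that each $\Phi_{\alpha\beta\gamma}$ is a bijection onto an open subset of the model space $L^2(W_\gamma,W_\gamma^\perp)\times L^2(W_\beta,W_\beta^\perp)\times\uu_+$; and that the transition maps $\Phi_{\alpha'\beta'\gamma'}\circ\Phi_{\alpha\beta\gamma}^{-1}$ are smooth on overlaps. I would first observe that coverage is immediate: given $u\in\UUr$, both $u^*u$ and $uu^*$ lie in $\Gr$ by definition, so they lie in some cross-section domains $\Omega_\beta$ and $\Omega_\gamma$; the transported operator $(\sigma_\gamma(uu^*)^{-1}u\,\sigma_\beta(u^*u))_{|\H_+}$ is a genuine unitary on $\H_+$ (since $u$ maps $u^*u$ isometrically onto $uu^*$ and the cross-sections identify these spaces with $\H_+$), hence lies in some chart domain $V_\alpha\subset U_+$.

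\emph{Injectivity and the image.} The map \eqref{urr-map} is injective because its value determines $u$: the first two components fix the initial and final spaces, and the third component $g:=\sigma_\gamma(uu^*)^{-1}u\,\sigma_\beta(u^*u)_{|\H_+}$ recovers $u=\sigma_\gamma(uu^*)\,g\,\sigma_\beta(u^*u)^{-1}_{|W_\beta'}$ after restricting appropriately. Composing with the three component charts $(\tilde\psi_\gamma,\tilde\psi_\beta,\psi_\alpha)$, which are themselves bijections onto open sets, shows $\Phi_{\alpha\beta\gamma}$ is injective with open image. To describe the image I would characterize $\Omega_{\alpha\beta\gamma}$ by the three open conditions defining it and note these pull back to open conditions in the model space via the component charts.

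\emph{Smoothness of transitions---the main obstacle.} On an overlap $\Omega_{\alpha\beta\gamma}\cap\Omega_{\alpha'\beta'\gamma'}$, the first two components of the transition map are just the transition functions $\tilde\psi_{\gamma'}\circ\tilde\psi_\gamma^{-1}$ and $\tilde\psi_{\beta'}\circ\tilde\psi_\beta^{-1}$ of the restricted Grassmannian, which are smooth by the analysis culminating in \eqref{trans}. The third component is the delicate one: it sends the pair (base data, group element $g$) to
\[
\psi_{\alpha'}\Big(\big(\sigma_{\gamma'}(uu^*)^{-1}\sigma_\gamma(uu^*)\big)_{|\H_+}\; g\; \big(\sigma_\beta(u^*u)^{-1}\sigma_{\beta'}(u^*u)\big)_{|\H_+}\Big),
\]
where $uu^*$ and $u^*u$ must be read back as smooth functions of the Grassmannian coordinates through $\tilde\psi_\gamma^{-1}$ and $\tilde\psi_\beta^{-1}$. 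I expect the crux to be showing that the \emph{comparison maps} $W\mapsto(\sigma_{\gamma'}(W)^{-1}\sigma_\gamma(W))_{|\H_+}$, valued in $U_+$, are smooth in the Grassmannian coordinates. This follows because each $\sigma_\beta$ is smooth into $\Ur$ by the cross-section proposition, inversion and multiplication in the Banach Lie group $\Ur$ are smooth, the restriction map $\Ur\ni v\mapsto v_{|\H_+}$ lands in $U_+$ (the product stabilizes $\H_+$ since both cross-sections represent the same point of $\Gr$), and this restriction is a bounded linear---hence smooth---operation. Combining these with smoothness of multiplication in $U_+$ and of the chart $\psi_{\alpha'}$, the third component is a composition of smooth maps, so the whole transition map is smooth. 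Thus the atlas is smooth and $\UUr$ is a Banach manifold modelled on the stated product of Hilbert and Banach spaces.
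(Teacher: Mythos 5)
Your proposal is correct and follows essentially the same route as the paper's proof: coverage, injectivity via the explicit reconstruction $u=\sigma_\gamma(uu^*)\,\psi_\alpha^{-1}(X)\,\sigma_\beta(u^*u)^{-1}$, openness of the image through the three component charts (the paper makes this explicit by identifying the image with the full product $\tilde\psi_\beta(\Omega_\beta)\times\tilde\psi_\gamma(\Omega_\gamma)\times\psi_\alpha(V_\alpha)$), and smoothness of transitions split into the two Grassmannian transition maps plus a third component handled by smoothness of the cross-sections and of multiplication and inversion in the Banach Lie group $\Ur$. Your additional observation that the comparison unitaries $\sigma_{\gamma'}(W)^{-1}\sigma_\gamma(W)$ stabilize $\H_+$ and that restriction to $\H_+$ is bounded linear, hence smooth, spells out a point the paper leaves implicit, but the argument is the same.
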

\prf{
The collection of all sets $\Omega_{\alpha\beta\gamma}$ for all values of indices covers $\UUr$ and the maps $\Phi_{\alpha\beta\gamma}$ are injective. 
Their inverses can be written as
\be \label{urr-chart-inverse}\Phi_{\alpha\beta\gamma}^{-1}(A,B,X) = \sigma_\gamma(\tilde\psi_\gamma^{-1}(A))\psi_\alpha^{-1}(X)\sigma_\beta(\tilde\psi_\beta^{-1}(B))^{-1}\ee
for $A\in L^2(W_\gamma,W_\gamma^\perp)$, $B \in L^2(W_\beta,W_\beta^\perp)$, $X \in \uu_+ $ belonging to the image of $\Phi_{\alpha\beta\gamma}$. Note that we consider here $\psi_\alpha^{-1}(X)$ as an operator in $\H$ by extending it trivially from $\H_+$.

It remains to demonstrate that image is open and that the transition functions $\Phi_{\alpha\beta\gamma}\circ \Phi_{\alpha'\beta'\gamma'}^{-1}$ are smooth.

To prove openness of $\Phi_{\alpha\beta\gamma}(\Omega_{\alpha\beta\gamma})$ notice that the sets $\tilde\psi_\beta(\Omega_\beta)$, $\tilde\psi_\gamma(\Omega_\gamma)$, $\psi_\alpha(V_\alpha)$ are open. 
For any point in $\tilde\psi_\beta(\Omega_\beta)\times\tilde\psi_\gamma(\Omega_\gamma)\times\psi_\alpha(V_\alpha)$ the right hand side of formula \eqref{urr-chart-inverse} is well-defined and produces a partial isometry which by construction belongs to $\Omega_{\alpha\beta\gamma}$. Thus we conclude that the image $\Phi_{\alpha\beta\gamma}(\Omega_{\alpha\beta\gamma})$ coincides with the Cartesian product
$\tilde\psi_\beta(\Omega_\beta)\times\tilde\psi_\gamma(\Omega_\gamma)\times\psi_\alpha(V_\alpha)$.

In order to prove smoothness of transition functions $\Phi_{\alpha\beta\gamma}\circ \Phi_{\alpha'\beta'\gamma'}^{-1}$ we will split them into three components:
\be\Phi_{\alpha\beta\gamma}\circ \Phi_{\alpha'\beta'\gamma'}^{-1}: (A,B,X) \mapsto (A',B',X'),\ee
where
\be A' = \tilde\psi_{\gamma}\circ\tilde\psi_{\gamma'}^{-1}(A),\ee
\be B' = \tilde\psi_{\beta}\circ\tilde\psi_{\beta'}^{-1}(B),\ee
\be X' = \psi_\alpha\big(\sigma_\gamma(\tilde\psi^{-1}_{\gamma'}(A))^{-1}  
\sigma_{\gamma'}(\tilde\psi_{\gamma'}^{-1}(A))\psi_{\alpha'}^{-1}(X)\sigma_{\beta'}(\tilde\psi_{\beta'}^{-1}(B))^{-1}
\sigma_\beta(\tilde\psi_{\beta'}(B))|_{\H+}\big).\ee

The expressions for $A'$ and $B'$ are smooth since they are just transition maps for the manifold $\Gr$. The expression for $X'$ is a composition of smooth maps (between manifolds), operator multiplication and inversion map in the Banach Lie group $\Ur$. In consequence it is also smooth.
}
Now we proceed to prove that $\UUr$ is a Banach Lie groupoid in the sense of definition \ref{BLG}. Note that source $s$ and target $t$ maps given by \eqref{u_source} and \eqref{u_target} in the introduced charts \eqref{urr-chart} are exactly projection onto first and second component of the Cartesian product. As such they are submersions.
Inversion map $\iota$ defined by \eqref{u_inv} in the chart assumes the form
\be \label{urr-chart-inversion}\Phi_{\beta\alpha'\gamma}\circ\iota\circ\Phi_{\alpha\beta\gamma}^{-1}(A,B,X) = (B,A,\psi_\alpha'((\psi_\alpha^{-1}(X))^*))\ee
and taking into account the fact that $*$ is an inversion map in the Banach Lie group $\Ur$, it is smooth. Now, multiplication map has the following formula in chart:
\be \label{urr-chart-mult}
\Phi_{\alpha''\beta'\gamma}(m(\Phi_{\alpha\beta\gamma}^{-1}(A,B,X), \Phi_{\beta\beta'\gamma'}^{-1}(B,C,X'))) =
(A,C,\psi_\alpha''(\psi_{\alpha}^{-1}(X)\psi_{\alpha'}^{-1}(X') )).\ee
Again since $\Ur$ is a Banach Lie group, the multiplication is smooth. 

Identity section $\epsilon$ \eqref{u_id} in the coordinates assumes the form
\be \label{urr-chart-identity}\Phi_{0\beta\beta}\circ\epsilon\circ\tilde\psi_{\beta}^{-1}(A) = (A,A,0),\ee
where $\psi_0$ is a particular chart on identity component of $U_+$ mapping unit element to $0\in\uu_+$.

Thus we have proved what follows:

\begin{thm}\label{thm:groupoid}
The Banach manifold of partial isometries $\UUr$ is a Banach Lie groupoid with respect to the maps \eqref{u_source}-\eqref{u_inv}.
\end{thm}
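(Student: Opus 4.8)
The plan is to verify that the groupoid maps \eqref{u_source}--\eqref{u_inv} satisfy every condition of Definition \ref{BLG}, using the smooth atlas from Theorem \ref{thm:manifold}. Since $\UUr$ is already an algebraic subgroupoid of $\UU \gr \L$, associativity, the unit laws, and the inversion identities hold automatically as algebraic identities inherited from $\UU$; the whole content of the theorem is thus the regularity of the structure maps for the constructed manifold structure.

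First I would read off each structure map in the charts \eqref{urr-chart}. In these coordinates the source and target are the projections of the product $L^2(W_\gamma,W_\gamma^\perp) \times L^2(W_\beta,W_\beta^\perp) \times \uu_+$ onto its second and first factors; a coordinate projection of a product of Banach spaces is a surjective submersion with split kernel, so both $s$ and $t$ qualify. By the remark after Definition \ref{BLG} it would in fact suffice to treat $s$ together with $\iota$, since $t = s\circ\iota$. For inversion, multiplication and the identity section I would then invoke the explicit in-chart expressions \eqref{urr-chart-inversion}, \eqref{urr-chart-mult} and \eqref{urr-chart-identity}. The point is that each of these is assembled from the chart and transition maps of $\Gr$ (smooth by Section \ref{sec:grres}), the chart maps $\psi_\alpha$ on $U_+$, and the group operations of $\Ur$ --- product, inverse, and the adjoint $*$ which realizes the inversion. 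Because $\Ur$ is a Banach Lie group these operations are smooth, so each structure map is a composition of smooth maps, hence smooth; in particular \eqref{urr-chart-identity} exhibits $\epsilon$ as a smooth embedding.

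The step I expect to be most delicate is the multiplication. One must confirm that any composable pair $(u_1,u_2)$ with $s(u_1)=t(u_2)$ can be placed in a compatible pair of charts sharing the intermediate index $\beta$, so that formula \eqref{urr-chart-mult} applies --- this is precisely the role of a common cross section $\sigma_\beta$ over the intermediate subspace. One should also check that the product again lies in $\UUr$ with off-diagonal blocks of Hilbert--Schmidt class, which follows from the ideal property of $L^2$ together with Proposition \ref{prop:groupoid-L2}. Once smoothness of $m$ in coordinates is secured, all conditions of Definition \ref{BLG} are verified and the proof concludes.
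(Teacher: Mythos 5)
Your proposal follows essentially the same route as the paper's proof: read the structure maps in the charts $\Phi_{\alpha\beta\gamma}$, observe that $s$ and $t$ become coordinate projections of the product model space (hence surjective submersions), and reduce $\iota$, $m$ and $\epsilon$ via \eqref{urr-chart-inversion}, \eqref{urr-chart-mult} and \eqref{urr-chart-identity} to the smooth group operations of the Banach Lie group $U_+$ (resp.\ $\Ur$), with the shared middle index $\beta$ --- i.e.\ the common cross-section at the intermediate point --- playing exactly the role you identify for the multiplication. Your added check that the product stays in $\UUr$ is harmless but unnecessary: closure under composable products is immediate from \eqref{gr-iso}, since $s(u_1u_2)=s(u_2)$ and $t(u_1u_2)=t(u_1)$, without invoking Proposition \ref{prop:groupoid-L2}.
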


\begin{prop} 
Unlike $\UU$, the groupoid $\UUr$ is transitive and pure, i.e. the map $(s,t):\UUr\to\Gr\times\Gr$ is surjective and both base and total space are modeled on a single (up to isomorphism) Banach space $L^2(\H_+,\H_-) \times L^2(\H_+,\H_-) \times \uu_+$.
\end{prop}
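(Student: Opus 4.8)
The plan is to establish the two assertions in turn: first the surjectivity of $(s,t)$ (transitivity), then the identification of every chart model space with one fixed Banach space (purity).

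For transitivity, fix arbitrary $W,W'\in\Gr$. Since $\Ur$ acts transitively on $\Gr$ (Section \ref{sec:grres}), I would choose $g,g'\in\Ur$ with $g\H_+=W$ and $g'\H_+=W'$, i.e. $gP_+g^*=P_W$ and $g'P_+(g')^*=P_{W'}$, and set $u:=gP_+(g')^*$. A direct computation gives $uu^*=gP_+(g')^*g'P_+g^*=gP_+g^*=P_W$ and $u^*u=g'P_+(g')^*=P_{W'}$; in particular $u^*u$ is a projector, so $u\in\UU$, and since $u^*u,uu^*\in\Gr$ we obtain $u\in\UUr$ with $(s,t)(u)=(P_{W'},P_W)$. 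As $W,W'$ were arbitrary, $(s,t)$ is surjective. I expect this part to be entirely routine once the explicit $u$ is written down.

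The substance of the purity claim is the fact that $L^2(W,W^\perp)\cong L^2(\H_+,\H_-)$ as Banach (indeed Hilbert) spaces for every $W\in\Gr$. By Proposition \ref{prop:gr-proj} one has $P_W-P_+\in L^2$, hence this operator is compact. Since $\H_+$ is infinite dimensional this forces $W=\mathrm{ran}\,P_W$ to be infinite dimensional, for otherwise $P_W$ would have finite rank and $P_+=P_W-(P_W-P_+)$ would be compact. Applying the same reasoning to $P_{W^\perp}-P_-=-(P_W-P_+)$ shows $W^\perp$ is infinite dimensional as well. Being closed subspaces of the separable space $\H$, both $W$ and $W^\perp$ are separable infinite dimensional Hilbert spaces, hence unitarily isomorphic to $\H_+$ and $\H_-$ respectively; conjugating by a pair of such unitaries $u_W\colon\H_+\to W$, $u_{W^\perp}\colon\H_-\to W^\perp$ yields the isometric isomorphism $A\mapsto u_{W^\perp}^*\,A\,u_W$ from $L^2(W,W^\perp)$ onto $L^2(\H_+,\H_-)$.

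From this every model space $L^2(W_\gamma,W_\gamma^\perp)\times L^2(W_\beta,W_\beta^\perp)\times\uu_+$ occurring in the atlas of Theorem \ref{thm:manifold} is isomorphic to the single fixed space $L^2(\H_+,\H_-)\times L^2(\H_+,\H_-)\times\uu_+$, while each chart of $\Gr$ is modeled on $L^2(\H_+,\H_-)$; hence both spaces are pure. The one point I would flag as the main (and only non-formal) obstacle is that $\Gr$ is \emph{disconnected}, its components being labelled by the Fredholm index of $p_+\colon W\to\H_+$, that is, by a ``virtual dimension''; a priori distinct components could be modeled on non-isomorphic spaces. The resolution, and the key observation of the argument above, is that the index records only a virtual dimension, whereas the genuine Hilbert space dimensions of $W$ and $W^\perp$ are always $\aleph_0$. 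Since the model space depends only on these cardinal dimensions, it is the same across all components, which is exactly what purity requires.
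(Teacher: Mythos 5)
Your proof is correct and follows essentially the same route as the paper's: the paper likewise deduces transitivity from the fact that every $W\in\Gr$ has infinite dimension and codimension (so a partial isometry between any two elements of $\Gr$ exists, and it lies in $\UUr$ by the very definition $\UUr=s^{-1}(\Gr)\cap t^{-1}(\Gr)$), and purity from the observation that each modeling space $L^2(W,W^\perp)$ is an infinite-dimensional separable Hilbert space, hence unique up to isomorphism. Your write-up simply supplies details the paper leaves implicit — the explicit partial isometry $gP_+(g')^*$ obtained from the transitive $\Ur$-action, the compactness argument via Proposition \ref{prop:gr-proj} showing $\dim W=\dim W^\perp=\aleph_0$, and the remark that the Fredholm index labelling the components of $\Gr$ does not affect these cardinal dimensions.
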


\begin{proof}
All subspaces $W\in\Gr$ are infinite dimensional and co-dimensional. Thus there exists a partial isometry mapping two elements of $\Gr$ to one another. Naturally this partial isometry is an element of $\UUr$.

Moreover for all $W\in\Gr$ the modeling space $L^2(W,W^\perp)$ is a separable Hilbert space, which is unique up to isomorphism.
\end{proof}

Note that $\UUr$ is not a Banach Lie subgroupoid of the Banach Lie groupoid of all partial isometries $\UU$. The obstacle is evident even on the level of bases of those groupoids. Namely, the restricted Grassmannian $\Gr$ is not a submanifold of the Grassmannian $\L$. It is only a weakly immersed submanifold (see \cite{BGJP}) as the image of the tangent of the inclusion map is not closed.

One can consider a family of more general $p$-restricted Grassmannians $\Grp$, for $1<p<\infty$ or $p=0$, by replacing $L^2$ space in the Definition \ref{def:gr} with $L^p$. Note that Grassmannian $\Gro$ was actually the first one to be introduced, see \cite{segal-wilson}. For all those Grassmannians it is possible to define a structure of Banach manifold in analogous way as in Section \ref{sec:grres}. 
\begin{prop}
    The sets $\UUr^p$ defined 
\be \label{gr-iso-p}\UUr^p = s^{-1}(\Grp) \cap t^{-1}(\Grp)\ee
are Banach Lie groupoids modelled on the Banach spaces $L^p(W,W^\perp) \times L^p(W',W'^\perp)\times \uu_+$.
\end{prop}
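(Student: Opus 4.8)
The plan is to show that the whole construction of Section~\ref{sec:res-group}, culminating in Theorems~\ref{thm:manifold} and~\ref{thm:groupoid}, transfers to the $p$-restricted setting once every occurrence of $L^2$ is replaced by $L^p$ and the restricted unitary group $\Ur$ is replaced by its $p$-analogue
\[\Ur^p := \{ u\in\U \;|\; [u,P_+]\in L^p\},\]
with Banach Lie algebra $\ur^p := \{ x\in\uu \;|\; [x,P_+]\in L^p\}$. The fiber group $U_+=U(\H_+)$, and hence the third factor $\uu_+$ of the model space, are unaffected, since block-diagonal unitaries commute with $P_+$ and thus lie in $\Ur^p$ for every $p$.

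First I would assemble the three ingredients used in Section~\ref{sec:res-group}. The Banach manifold structure on $\Grp$, with charts valued in $L^p(W,W^\perp)$ and smooth fractional transition maps, is already available as the analogue of the structure described in Section~\ref{sec:grres}. Next I would verify that $\Ur^p$ is a Banach Lie group acting transitively on $\Grp$ with the stabilizer of $\H_+$ equal to the Banach Lie subgroup $U_+\times U_-$, so that $\Grp \cong \Ur^p/(U_+\times U_-)$; this proceeds exactly as in the $p=2$ case (cf.~\cite{beltita2006,spera-valli}), the closure of $\Ur^p$ under products and inverses being guaranteed by the fact that $L^p$ is an ideal in $\Li$. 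Finally, the existence of smooth local cross sections $\sigma_W:\Omega_W\to\Ur^p$ follows from Theorem~4.19 of~\cite{beltita2006} exactly as in the proposition preceding Theorem~\ref{thm:manifold}.

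Granting these, I would copy the definitions \eqref{urr-map} and \eqref{urr-chart} verbatim and repeat the proof of Theorem~\ref{thm:manifold}: openness of the chart images is argued as before, and smoothness of the transition functions $\Phi_{\alpha\beta\gamma}\circ\Phi_{\alpha'\beta'\gamma'}^{-1}$ reduces to smoothness of the transition maps of $\Grp$ together with smoothness of multiplication and inversion in the Banach Lie group $\Ur^p$. The groupoid axioms then follow as in Theorem~\ref{thm:groupoid}, because the coordinate formulas \eqref{urr-chart-inversion}, \eqref{urr-chart-mult} and \eqref{urr-chart-identity} for $\iota$, $m$ and $\epsilon$ involve only the structure maps of $\Ur^p$ and the charts of $\Grp$, and the source and target maps remain the two Cartesian projections, hence submersions.

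The main obstacle is concentrated in the second ingredient: establishing that $\Ur^p$ is a Banach Lie group and that $\Grp$ is a homogeneous space under it with $U_+\times U_-$ a Banach Lie subgroup; once the quotient picture and the local cross sections are secured, the rest is a mechanical translation of the $p=2$ arguments. The only analytic input specific to the index $p$ is the boundedness of the bilinear multiplications of the form $L^p(W,\H_-)\times L^\infty(\H_+,W)\to L^p(\H_+,\H_-)$ underlying both the smoothness of the charts and the group structure of $\Ur^p$; this holds precisely because $L^p$ is an ideal in $\Li$, and in particular the endpoint case $p=0$ requires no separate treatment, the compact operators $L^0$ being a closed ideal.
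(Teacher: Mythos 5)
Your proposal is correct and takes essentially the same approach as the paper: the paper disposes of this proposition in a single line, declaring the proof ``completely analogous'' to those of Theorems \ref{thm:manifold} and \ref{thm:groupoid}, which is precisely the translation you carry out. If anything, you are more explicit than the paper, since you isolate the only $p$-dependent analytic ingredient (the ideal property giving bounded multiplication $L^p\times L^\infty\to L^p$) and the needed homogeneous-space and local cross-section structure on $\Grp$ under the group $\{u\in\U\;|\;[u,P_+]\in L^p\}$.
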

The proof is completely analogous to the proofs of Theorems \ref{thm:manifold} and \ref{thm:groupoid}. As before they are not Banach Lie subgroupoids due to the difference in topologies.

\begin{prop}
The Banach Lie groupoid $\UUr^0\gr\Gro$ is an immersed submanifold of $\UUr\gr\L$, but not split immersed submanifold.
\end{prop}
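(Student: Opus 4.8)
The plan is to reduce the assertion to a single fact about operator ideals, namely the behaviour of the inclusion of compact operators into bounded operators, and then invoke a classical non-complementation theorem. First I would note that $\Gro$ and $\UUr^0$ carry atlases of the same form as in Section~\ref{sec:res-group} and at the end of Section~\ref{sec:partiso}, with the ideal $L^2$ replaced throughout by the ideal $L^0$ of compact operators and with the unchanged third coordinate in $\uu_+$. Because $\Gro\subset\L$ and $\UUr^0\subset\UU$ as algebraic subgroupoids, in charts of the form \eqref{urr-chart} the inclusion $\UUr^0\hookrightarrow\UU$ is simply $(A,B,X)\mapsto(A,B,X)$, where $A,B$ range over compact operators on the source and over bounded operators on the target. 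Hence at each $u\in\UUr^0$ its tangent map is the natural inclusion
\[
L^0(W_\gamma,W_\gamma^\perp)\times L^0(W_\beta,W_\beta^\perp)\times\uu_+ \;\hookrightarrow\; L^\infty(W_\gamma,W_\gamma^\perp)\times L^\infty(W_\beta,W_\beta^\perp)\times\uu_+,
\]
the inclusion on the first two factors and the identity on the third, while the base inclusion $\Gro\hookrightarrow\L$ is the first-factor version alone. Everything thus reduces to the inclusion $L^0(W,W^\perp)\hookrightarrow L^\infty(W,W^\perp)$ for $W\in\Gro$.

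Next I would settle the immersion claim. The decisive point is that $\Gro$ is modelled on the compact operators equipped with the operator norm, which is precisely the norm induced from the ambient model $L^\infty(W,W^\perp)$ of $\L$; this is exactly what distinguishes the case $p=0$ from $\Gr$, where the strictly finer Hilbert--Schmidt norm makes the $L^2$ tangent image non-closed and yields only a weak immersion. Since the compact operators form a norm-closed subspace of the bounded operators, the image $L^0(W,W^\perp)$ is closed in $L^\infty(W,W^\perp)$, and closedness is preserved after taking the product with the identity factor $\uu_+$. As the tangent map is manifestly injective, both the base and the total-space inclusions are injective immersions with closed tangent image, so $\UUr^0\gr\Gro$ is an immersed submanifold of $\UU\gr\L$.

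Finally, and this is the heart of the matter, I would show that the image is not complemented, so that the immersion fails to split. Every $W\in\Gro$ is infinite dimensional and infinite codimensional, since the Fredholm condition on $p_+$ forbids either $W$ or $W^\perp$ from being finite dimensional; thus $L^0(W,W^\perp)\cong L^0(\H)$ and $L^\infty(W,W^\perp)\cong L^\infty(\H)$, and it suffices to show that $L^0(\H)$ is uncomplemented in $L^\infty(\H)$. The main obstacle is precisely this classical non-complementation, which I would deduce from Phillips' theorem that $c_0$ is uncomplemented in $\ell^\infty$. Fixing an orthonormal basis of $\H$, the diagonal operators form a copy of $\ell^\infty$ inside $L^\infty(\H)$ whose intersection with $L^0(\H)$ is a copy of $c_0$, and passing to the diagonal defines a norm-one projection $E:L^\infty(\H)\to\ell^\infty$ carrying $L^0(\H)$ into $c_0$. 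If $P:L^\infty(\H)\to L^0(\H)$ were a bounded projection, then $x\mapsto E(P(x))$ restricted to the diagonal $\ell^\infty$ would be a bounded projection onto $c_0$, contradicting Phillips' theorem. Hence $L^0(W,W^\perp)$ is uncomplemented in $L^\infty(W,W^\perp)$, the tangent image does not split, and $\UUr^0\gr\Gro$ is not a split immersed submanifold of $\UU\gr\L$.
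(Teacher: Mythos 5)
Your proposal is correct and follows essentially the same route as the paper, whose proof rests on exactly the two facts you use: the ideal $L^0$ is operator-norm closed in $L^\infty$ (yielding the immersion, since the model norm of $\Gro$ is the ambient operator norm, in contrast to the $L^2$ case) and $L^0$ admits no complementary Banach subspace in $L^\infty$ (ruling out splitness). The paper merely asserts these facts, whereas you additionally carry out the chart-level reduction via \eqref{urr-chart} and derive the non-complementation from Phillips' theorem through the diagonal conditional expectation, which is a correct, more self-contained rendering of the same argument; you also rightly read the ambient groupoid as $\UU\gr\L$, silently correcting the misprint $\UUr\gr\L$ in the statement.
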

\begin{proof}
It follows from the fact that the ideal of compact operators $L^0$ is closed with respect to operator norm. However it does not admit a complementary Banach space, i.e. there is no such Banach space $E$ that $L^0\oplus E = L^\infty$.
\end{proof}

\backmatter

\bmhead{Acknowledgements}
This research was partially supported by National Science Centre, Poland/Fonds zur Förderung der wissenschaftlichen Forschung grant ``Banach Poisson--Lie groups and integrable systems'' number 2020/01/Y/ST1/00123. The authors have no relevant financial or non-financial interests to disclose.


\end{document}